\def\NN{\mathbb{N}}
\def\ZZ{\mathbb{Z}}
\def\RR{\mathbb{R}}
\def\CC{\mathbb{C}}
\def\PP{\mathbb{P}}
\def\cA{\mathcal{A}}
\def\SL{\operatorname{SL}}
\def\sort{\operatorname{sort}}
\title{Some monoids of Pisot matrices}
\author{Artur Avila and Vincent Delecroix}
\newtheorem{theorem}{Theorem}
\newtheorem{lemma}[theorem]{Lemma}
\newtheorem{definition}[theorem]{Definition}
\begin{document}
\maketitle
\abstract{A matrix norm gives an upper bound on the spectral radius of a
matrix. Knowledge on the location of the dominant eigenvector also
leads to upper bound of the second eigenvalue.
We show how this technique can be used to prove that certain
semi-group of matrices arising from continued fractions have a Pisot
spectrum: namely for all matrices in this semi-group all eigenvalues
except the dominant one is smaller than one in absolute value.}

\section{Introduction}
A \emph{dominant eigenvalue} of a real square matrix is an eigenvalue of
maximum modulus. We call a square matrix \emph{Pisot} if it has
non-negative integer entries, its dominant eigenvalue is simple and all
eigenvalues different from the dominant one have absolute values less than one.
We prove that several monoids of non-negative matrices enjoy the
property of all being Pisot.

\medskip

Our first family of matrices is related to the so called \emph{fully subtractive} (multidimensional) continued fraction
algorithm. For an integer $d \geq 2$ we define
for each $k=1,\ldots,d$ the matrix $A_{FS,d}^{(k)}$ by
\[
(A_{FS,d}^{(k)})_{ij} = \left\{ \begin{array}{ll}
1 & \text{if $j=k$ or $i=j$}, \\
0 & \text{otherwise}
\end{array} \right.
\]
For $d=3$ this boils down to the three matrices
\[
A_{FS,3}^{(1)} = \begin{pmatrix}1&0&0\\1&1&0\\1&0&1\end{pmatrix},
\qquad
A_{FS,3}^{(2)} = \begin{pmatrix}1&1&0\\0&1&0\\0&1&1\end{pmatrix},
\qquad
A_{FS,3}^{(3)} = \begin{pmatrix}1&0&1\\0&1&1\\0&0&1\end{pmatrix}.
\]
All non-degenerate products of the matrices $A_{FS,d}^{(k)}$ satisfy the Pisot property.
\begin{theorem} \label{thm:fs}
Let $A = A_{FS,d}^{(i_1)} A_{FS,d}^{(i_2)} \ldots A_{FS,d}^{(i_n)}$ be a product of the fully subtractive matrices in dimension $d$.
Then the matrix $A$ is primitive if and only if all letters $\{1, \ldots, d\}$ appear in the sequence $(i_1, i_2, \ldots, i_n)$.
Moreover, if the matrix $A$ is primitive then it is Pisot.
\end{theorem}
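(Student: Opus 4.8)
The plan is to establish the primitivity criterion by an elementary support‑propagation argument, and then to obtain the Pisot property by reducing it, via Perron--Frobenius and the determinant, to a single norm estimate on the ``transverse'' part of $A$, the estimate itself being where the fully subtractive structure is used.

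Each fully subtractive matrix acts on column vectors by $A_{FS,d}^{(k)}x = x + x_k(\mathbf 1-e_k)$; in particular $A_{FS,d}^{(k)}\ge I$ entrywise and $A_{FS,d}^{(k)}u$ is strictly positive as soon as $u_k>0$. Writing $A=A_{FS,d}^{(i_1)}\cdots A_{FS,d}^{(i_n)}$ and computing $Ae_j$ by applying the factors from right to left, the support of the partial product can only grow; it stays equal to $\{j\}$ until the first factor $A_{FS,d}^{(j)}$ is reached, and becomes all of $\{1,\dots,d\}$ at that moment. Hence if every letter occurs then $A>0$, so $A$ is primitive. Conversely, if a letter $m$ never occurs then $A_{FS,d}^{(k)}e_m=e_m$ for every $k\ne m$, so $Ae_m=e_m$: the $m$-th column of $A$ is $e_m$, no vertex other than $m$ has an arc into $m$ in the digraph of $A$, and $A$ is reducible, hence not primitive. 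This settles the first assertion (and shows that here primitive $\Leftrightarrow A>0$).

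Now assume $A$ primitive, so $A>0$. Since $\det A_{FS,d}^{(k)}=\det(I+(\mathbf 1-e_k)e_k^{T})=1$ we get $\det A=1$. By Perron--Frobenius for positive matrices $A$ has a simple positive dominant eigenvalue $\lambda$ with positive right and left eigenvectors $v,u$ (normalized by $u^{T}v=1$) and all other eigenvalues of modulus $<\lambda$; as every row sum is $\ge d\ge 2$ we have $\lambda>1$. So simplicity of the dominant eigenvalue holds, and it remains to place the other eigenvalues inside the open unit disc. For the spectral projection $P=vu^{T}$ one has $A-\lambda P=A(I-P)$, with spectrum $\{0\}\cup\{\lambda_j:j\ge 2\}$; thus $\max_{j\ge 2}|\lambda_j|=\rho(A-\lambda P)\le\|A-\lambda P\|$ for any submultiplicative matrix norm, and it suffices to produce a norm making the right‑hand side $<1$. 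Using the operator norm attached to the weighted sup‑norm $x\mapsto\max_j|x_j|/v_j$, this unwinds to
\[
\max_{i}\ \sum_{j} v_j\Bigl|\frac{A_{ij}}{v_i}-\lambda u_j\Bigr|\ <\ 1,
\]
i.e.\ each row of $A$ is close, in a $v$-weighted sense, to its rank‑one approximation $\lambda v_i u$.

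This inequality is the crux, and the step I expect to be hardest: it fails for a general primitive integer matrix of determinant $1$, so it must genuinely use that $A$ is a product of the fully subtractive shears in which every shear direction occurs. This is where the location of the dominant eigenvector enters: $v$ lies in the nested cones $A_{FS,d}^{(i_1)}\cdots A_{FS,d}^{(i_m)}(\RR_{\ge 0}^d)$ coming from the periodically extended word, which already forces $v>0$ with $v_{i_1}=\min_j v_j$, and symmetrically $u$ is constrained by the reversed word (so $u_{i_n}=\max_j u_j$, and so on). I would prove the inequality by induction on $n$, peeling off $A_{FS,d}^{(i_1)}$: the rows of $A$ are obtained from those of $A_{FS,d}^{(i_2)}\cdots A_{FS,d}^{(i_n)}$ by adding the $i_1$-st row to all the others, and $v$ transforms compatibly, which should allow a comparison with the word of length $n-1$; the base case is a product in which each letter appears exactly once, where the image cone can be written down explicitly. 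Finally, since $A>0$ the rank‑one approximation is never exact, so the inequality is strict and $|\lambda_j|<1$ for all $j\ge2$; hence $A$ is Pisot.
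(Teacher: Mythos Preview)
Your primitivity argument is correct and essentially identical to the paper's.

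For the Pisot part, your framework is sound and in fact closely related to the paper's: bounding the subdominant spectral radius by an operator norm on a complement of the Perron direction is exactly the paper's strategy (Lemma~\ref{lem:strategy}), phrased there as $\|A^{T}\|_{H_v}\le 1$ rather than $\|A-\lambda P\|\le 1$. The genuine gap is that you never prove the key inequality. Your proposed induction, peeling off $A_{FS,d}^{(i_1)}$, is problematic: the shorter product $A_{FS,d}^{(i_2)}\cdots A_{FS,d}^{(i_n)}$ need not be primitive (if $i_1$ was the unique occurrence of that letter), and even when it is, your inequality involves the Perron data $u,v$ of the \emph{full} product $A$, which do not transform in any simple way under removal of a factor. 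The ``base case'' of one occurrence of each letter is also a $d!$-element family rather than a single matrix, and you give no argument for it.

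What you are missing is the structural reason the norm bound holds. The paper works with $B^{(k)}=(A_{FS,d}^{(k)})^{T}$ and with the specific domain $D_{FS,d}$, the projective simplex on the vectors $f_i=e-e_i$. Writing the Perron eigenvector as $v=\sum_i\mu_i f_i$ with $\mu_i\ge 0$, $\sum_i\mu_i=1$, one finds that on $H_v=\{z:\langle v,z\rangle=0\}$ the map $B^{(k)}$ replaces $z_k$ by $\sum_j\mu_j z_j$; that is, $B^{(k)}|_{H_v}$ is literally a row-stochastic matrix (the identity with its $k$-th row replaced by $(\mu_1,\ldots,\mu_d)$). This gives $\|B^{(k)}\|_{H_v}\le 1$ in the $L^\infty$ norm immediately, for each factor, with no induction. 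Primitivity forces $v$ into the interior of $D_{FS,d}$, so all $\mu_j>0$; the product of these stochastic matrices along the word is then a \emph{positive} stochastic matrix, whose subdominant eigenvalues are automatically of modulus $<1$. That one observation replaces your entire inductive program.
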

Recall that a non-negative square matrix $A$ is primitive if there exists a positive integer $n$ so that $A^n$ has all its
entries positive. The case $d=3$ of Theorem~\ref{thm:fs} was proved in~\cite{ArnouxIto01}. The authors used an induction
on characteristic polynomials and our approach is radically different.

\medskip

The same result holds for another set of $3 \times 3$ matrices related to the Brun multidimensional
continued fractions. Let
\[
A_{Br}^{(1)} = \begin{pmatrix}1&1&0\\0&1&0\\0&0&1\end{pmatrix},
\qquad
A_{Br}^{(2)} = \begin{pmatrix}1&1&0\\1&0&0\\0&0&1\end{pmatrix},
\qquad
A_{Br}^{(3)} = \begin{pmatrix}1&0&1\\1&0&0\\0&1&0\end{pmatrix}.
\]
\begin{theorem} \label{thm:brun}
Let $A = A_{Br}^{(i_1)} A_{Br}^{(i_2)} \ldots A_{Br}^{(i_n)}$ be a product of the $3 \times 3$ Brun matrices.
Then, $B$ is primitive if and only if the matrix $B^{(3)}$ appears in the product.
Moreover, if $B$ is primitive then it is Pisot.
\end{theorem}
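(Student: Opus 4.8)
The plan is to argue exactly as in the proof of Theorem~\ref{thm:fs}: first settle primitivity by inspecting the generators, and then bound the two non-leading eigenvalues of a primitive product by producing, for each admissible position of the leading eigenvector, a norm on a transversal plane in which every generator acts as a non-expansion while $A_{Br}^{(3)}$ acts as a strict contraction.

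\emph{Primitivity.} If $A_{Br}^{(3)}$ does not occur, then $A$ is a product of $A_{Br}^{(1)}$ and $A_{Br}^{(2)}$, both of block-diagonal shape $\bigl(\begin{smallmatrix}*&*&0\\ *&*&0\\ 0&0&1\end{smallmatrix}\bigr)$; hence $A$ leaves the coordinate plane $\langle e_1,e_2\rangle$ invariant, is reducible, and so is not primitive. Conversely, assume $A_{Br}^{(3)}$ occurs. Every $A_{Br}^{(k)}$ has a $1$ in the upper-left corner, so $A_{11}>0$, and it is enough to show that $A$ is irreducible, i.e.\ leaves no proper coordinate subspace invariant. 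This will be a finite check: running through the six proper coordinate subspaces and using a few elementary facts about the generators --- $A_{Br}^{(k)}e_3=e_3$ for $k\in\{1,2\}$ whereas $A_{Br}^{(3)}e_3=e_1$; the first coordinate, once positive, stays positive under every generator; no generator fixes the ray through $e_2$ --- one sees that a product containing $A_{Br}^{(3)}$ cannot preserve any of them. Thus $A$ is irreducible with a positive diagonal entry, hence primitive.

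\emph{Pisot property.} Write $A=B_1\cdots B_n$ with $B_k=A_{Br}^{(i_k)}$. One checks that the cone $\mathcal{C}=\{x\in\RR^{3}_{\ge 0}: x_1\ge x_2\}$ satisfies $B_k\,\RR^{3}_{\ge 0}\subseteq\mathcal{C}$ for $k=1,2,3$; in particular $\mathcal{C}$ is forward invariant, so for a primitive $A$ the leading (right) eigenvector $v$ and all the vectors $v_k$ given by $v_n=v$, $v_{k-1}=B_kv_k$ lie in $\mathcal{C}$, with $v_0$ proportional to $v$. Since $B_k$ maps the line $\langle v_k\rangle$ onto $\langle v_{k-1}\rangle$, it induces $\overline{B_k}\colon\RR^{3}/\langle v_k\rangle\to\RR^{3}/\langle v_{k-1}\rangle$, and $\overline{B_1}\circ\cdots\circ\overline{B_n}$ equals the map $\overline{A}$ induced by $A$ on $\RR^{3}/\langle v\rangle$, whose two eigenvalues are precisely the two non-leading eigenvalues of $A$. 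It therefore suffices to equip each plane $\RR^{3}/\langle p\rangle$, $p\in\mathcal{C}$, with a norm $N_p$ depending only on the ray of $p$, such that for every generator $B=A_{Br}^{(k)}$ and every $p\in\mathcal{C}$ the induced map $\RR^{3}/\langle p\rangle\to\RR^{3}/\langle Bp\rangle$ has operator norm at most $c_k$, with $c_1=c_2=1$ and $c_3=c<1$. It then follows that $\|\overline{A}\|_{N_v}\le\prod_k c_{i_k}\le c<1$, since $A_{Br}^{(3)}$ occurs, so both non-leading eigenvalues of $A$ have modulus $<1$; as $\det A=\pm 1$ (each generator has determinant $\pm1$), the leading eigenvalue is then $>1$ and, by Perron--Frobenius, simple, and $A$ has non-negative integer entries, so $A$ is Pisot.

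\emph{The hardest step} will be the construction of the family $(N_p)_{p\in\mathcal{C}}$ together with the three contraction estimates. For $A_{Br}^{(3)}$ one needs a \emph{uniform} strict contraction over all of $\mathcal{C}$, whereas $A_{Br}^{(1)}$ and $A_{Br}^{(2)}$ are not themselves Pisot, so their induced maps can only be non-expanding, and the norms have to be calibrated to make these two borderline cases tight; this may force one to shrink $\mathcal{C}$ to a smaller forward-invariant cone, still containing every leading eigenvector, so that the estimates survive up to the boundary. Once suitable norms are found, the remainder is a bounded computation carried out generator by generator, exactly parallel to the corresponding step of Theorem~\ref{thm:fs}.
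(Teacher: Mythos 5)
Your overall architecture is sound and is in fact the dual of the paper's: the paper bounds the non-leading eigenvalues by estimating the transposed matrices $B^{(i)}$ restricted to the orthogonal hyperplane $H_v$ of the Perron vector (with the explicit $L^\infty$ operator norm), which is canonically the same as estimating the induced maps on the quotients $\RR^3/\langle v\rangle$ that you propose. Your primitivity discussion also matches the paper's. But the proof has a genuine hole exactly where you flag it: the family of norms $(N_p)$ and the three contraction estimates are never produced, and these are the entire mathematical content of the theorem. Worse, the specific specification you ask for --- a \emph{uniform} constant $c_3=c<1$ for the single generator $A_{Br}^{(3)}$ over a forward-invariant cone containing every leading eigenvector --- is stronger than what the paper proves and is at best very delicate. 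With the paper's choice ($L^\infty$ on $H_v$, $v$ in the adapted cone $D_{Br}=\{x>y>z\}$) one only gets $\|B^{(3)}\|_v\le 1$, with equality attained: for $v=(1,1,0)$ one has $H_v=\{z_1+z_2=0\}$ and $B^{(3)}z=(0,z_3,z_1)$, an $L^\infty$-isometry of $H_{(1,1,0)}$ onto $H_{e_1}$. Your fallback of shrinking the cone does not escape this, because leading eigenvectors of primitive products accumulate on such boundary directions: the Perron vector of $(A_{Br}^{(1)})^nA_{Br}^{(3)}$ is $(\lambda^2,\lambda,1)$ with $\lambda\to\infty$, hence converges to $e_1$, so no forward-invariant cone containing all leading eigenvectors stays away from the degenerate locus.

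The paper's actual route avoids this issue: it only proves the non-expansion $\|B^{(i)}\|_{D^{(i)}}\le 1$ for each generator (Lemma~\ref{lem:strategy}), by writing $v=\mu_1(1{:}0{:}0)+\mu_2(1{:}1{:}0)+\mu_3(1{:}1{:}1)$ and observing that $B^{(i)}$ acts on $H_v$ like a substochastic/stochastic-type matrix in the coefficients $\mu_j$; the \emph{strict} inequality is then extracted not from a single generator but from the whole primitive product, using that primitivity forces the Perron vector (and its whole orbit under the partial products) into the interior of $D$, so all $\mu_j$ are strictly positive and the resulting product of induced matrices has all row sums strictly less than one. If you want to complete your write-up, you should replace the ``uniform $c_3<1$'' requirement by this two-step scheme: per-generator non-expansion on the adapted cone, plus strict contraction of the full product coming from interiority/positivity. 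As written, the key step is missing and the version of it you announce is not attainable.
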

This result was already known since the work of Brun~\cite{Brun57}.

\medskip

The proofs of Theorem~\ref{thm:fs} and~\ref{thm:brun} are elementary and uses the following inequality.
Given a non-negative primitive $d \times d$ matrix $A$ and its Perron-Frobenius eigenvector $v \in \RR^d_+$ the absolute value $\lambda_2$
of its second largest eigenvalue satisfies
\[
\lambda_2 \leq \sup_{x \in v^\perp \backslash \{0\}} \frac{\|A x\|}{\|x\|}.
\]
where $\|.\|$ is any norm on $\RR^d$. In our proof, the information we have on the localization of the Perron-Frobenius eigenvector
comes from the dynamical systems induced by the matrices; in other words the fully subtractive and Brun continued fraction algorithms.

\medskip

From a diophantine approximation point of view,
the Pisot property is particularly interesting because it provides the so called
exponential convergence of the continued fraction
expansion for almost every vectors (see~\cite{Lagarias}). We show
that the above results naturally extends to this situation in Section~\ref{sec:lyapunov}.

\medskip

Beyond continued fractions, Pisot matrices are of special interest in
substitutive dynamical systems. More precisely, replacing matrices with so
called substitutions, the associated dynamical systems admit $d-1$ eigenvalues
and in many cases the dynamical system can be proved to have purely discrete
spectrum (see~\cite{Pytheas} Chapter~7). This was the main motivation for the
study of the fully subtractive matrices in~\cite{ArnouxIto01}.

\section{Fully subtractive and Brun continued fractions} \label{sec:cf}
Let us consider a finite or countable set $\cA$ that we call alphabet and for each $i \in \cA$
a matrix $A^{(i)} \in \SL(d,\ZZ)$. We already saw two examples of this with
the fully subtractive algorithm where $\cA_{FS,d} = \{1,2,\ldots,d\}$ and for Brun
algorithm where $\cA_{Br} = \{1,2,3\}$.

To the data $(\cA, (A^{(i)})_{i \in \cA})$ we associate the set of infinite words $\Delta = \cA^\NN$,
the shift map $T: \Delta \rightarrow \Delta$ and a cocycle
\[
\forall x \in \Delta, \forall n \geq 0, \quad A_n(x) = A^{(x_0)} A^{(x_1)} \ldots A^{(x_{n-1})}.
\]
The maps $A_n: \Delta \rightarrow \SL(d,\ZZ)$ satisfy the so called \emph{cocycle property}:
$A_{m+n}(x) = A_m(x) A_n(T^m x)$.

\begin{definition}
Let $(A^{(i)})_{i \in \cA}$ be a set of matrices in $\SL(d,\ZZ)$ where $\cA$ is a finite or countable alphabet.
We say that a set $D \subset \PP(\RR^d)$ is \emph{adapted} to these matrices if it is non-empty, it is the
closure of its interior and for all $i \in \cA$ we have $A^{(i)} D \subset D$.
\end{definition}
For example $PP(\RR^d)$ is always adapted. But we will be interested in the somewhat
smallest adapted set in order to localize the dominant eigenvector.

Given $(A^{(i)})_{i \in \cA}$ and $D \subset \PP(\RR^d)$ adapted, we define $D^{(i)} = A^{(i)} D$
and more generally for a finite word $w = i_0 i_1 \ldots i_{n-1}$ we define
$D^{(w)} = A^{(i_0)} A^{(i_1)} \ldots A^{(i_{n-1})} D$. Note that for any $w$ the set
$D^{(w)}$ is \emph{not} empty. Given an infinite word $x = x_0 x_1 \ldots \in \Delta$ we also
set $D_n(x) = A_n(x) D$ and $D_\infty(x) = \bigcap_{n \geq 0} D_n(x)$.

Let $A \in \SL(d,\ZZ)$ and let $\lambda$ be its spectral radius. Consider its
Jordan decomposition over $\CC$ and the Jordan blocks
associated with an eigenvalue of modulus $\lambda$ and being of maximal
dimension. To each of these maximal Jordan block is associated exactly one
eigenvector $v_i$. The \emph{dominant eigenspace} of $A$ is $\RR^d \cap (\CC
v_1 \oplus \CC v_2 \oplus \ldots \CC v_r)$. We have the following elementary
result.
\begin{lemma} \label{lem:dominant_eigenspace}
Let $(A^{(i)})_{i \in \cA}$ be a finite or countable set of matrices in $\SL(d,\ZZ)$ and
let $D$ be adapted. Let $x = x_0 x_1 \ldots \in \Delta$ be an infinite word over $\cA$.
Then for any $n$, the set $D_n(x)$ contains a basis of the dominant
eigenspace of $A_n(x)$.
\end{lemma}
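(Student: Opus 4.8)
The plan is to reduce the statement to a fact about a single matrix, and then to combine the invariance of $D$ with the asymptotics of powers. Fix $n$, set $B = A_n(x)$ and $D' = D_n(x) = BD$. First I would record three properties of $D'$. It is nonempty, since $D$ is and $B$ is invertible. It is closed with nonempty interior: $D$ is the closure of its interior, hence closed with nonempty interior, and $B$ induces a homeomorphism of $\PP(\RR^d)$, so the same holds for $BD$. Finally it is $B$-invariant, i.e. $BD' \subseteq D'$: this is where ``adapted'' enters, since $A^{(i)} D \subseteq D$ for every $i$ lets us peel off the letters of $B = A^{(x_0)} A^{(x_1)} \cdots A^{(x_{n-1})}$ one at a time to get $BD \subseteq D$, whence $BD' = B^2 D \subseteq BD = D'$. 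Thus it suffices to prove: if $B \in \SL(d,\ZZ)$ and $D' \subseteq \PP(\RR^d)$ is closed, has nonempty interior and satisfies $BD' \subseteq D'$, then $D'$ contains a basis of the dominant eigenspace $E$ of $B$.

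For this reduced statement, let $\lambda$ be the spectral radius of $B$ and consider the normalized powers $B^k/\|B^k\|$, which lie in the unit sphere of $\operatorname{End}(\RR^d)$; by compactness the set $\mathcal{L}$ of their subsequential limits is nonempty. The linear-algebra heart of the argument is the identity $\sum_{L \in \mathcal{L}} \operatorname{Im}(L) = E$. One inclusion comes from splitting $\RR^d$ as the sum $G \oplus H$ of the generalized eigenspaces of modulus $\lambda$ and the rest: since $\|B^k|_H\| = o(\|B^k\|)$, every $L \in \mathcal{L}$ kills $H$, and a block-by-block analysis on $G$ shows that after normalization only the Jordan blocks of maximal size survive, each contributing exactly the line of its eigenvector; hence $\operatorname{Im}(L) \subseteq \RR^d \cap \bigoplus_i \CC v_i = E$. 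For the reverse inclusion, for each maximal block with eigenvalue $\mu$ one chooses a subsequence along which $(\mu/\lambda)^k \to 1$, so that the corresponding limit $L$ is a nonzero multiple of a rank-one map onto $\CC v_i$; collecting these limits over all maximal blocks recovers all of $E$. When $B$ is primitive, which is the only case needed in the sequel, this step is immediate: $E = \RR v_0$ is one-dimensional and $B^k/\lambda^k$ converges to the rank-one Perron projection.

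It remains to feed this into the invariance. Fix $L \in \mathcal{L}$ with $B^{k_j}/\|B^{k_j}\| \to L$, and let $\Omega \subseteq \RR^d$ be the open, nonempty cone over $\operatorname{int}(D')$. Since $L \neq 0$ its kernel is nowhere dense, so $L(\Omega)$ is a nonempty open subset of $\operatorname{Im}(L)$; and for any $v \in \Omega$ with $Lv \neq 0$ we have $[B^{k_j} v] \to [Lv]$, while $B^{k_j}\operatorname{int}(D') \subseteq \operatorname{int}(D')$ and $D'$ is closed, so $[Lv] \in D'$. Hence $D'$ contains a nonempty open subset of $\PP(\operatorname{Im}(L)) \subseteq \PP(E)$, which in particular spans $\operatorname{Im}(L)$. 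Taking the union over finitely many $L \in \mathcal{L}$ whose images already span $E$, we obtain a subset of $D'$ spanning $E$, from which a basis of $E$ contained in $D'$ is extracted.

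I expect the only genuine difficulty to be the linear-algebra identity $\sum_{L} \operatorname{Im}(L) = E$ when $B$ has several eigenvalues of modulus $\lambda$ or nontrivial Jordan structure there: choosing subsequences that make the various rotation factors converge, and checking that no subdominant directions leak into the limit. The invariance part is then a short compactness-and-closedness argument, and in the primitive case — all that Theorems~\ref{thm:fs} and~\ref{thm:brun} actually use — the whole proof collapses to one application of the Perron-Frobenius theorem.
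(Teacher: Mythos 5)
Your proof is correct, and it follows exactly the route the paper intends: the paper omits the proof of Lemma~\ref{lem:dominant_eigenspace}, saying only that it rests on the fact that the maximal growth of $\|A^n u\|$ is $\lambda^n n^k$ with $k$ the maximal dimension of a dominant Jordan block, and your argument via subsequential limits of $B^k/\|B^k\|$ combined with the invariance $BD' \subseteq D'$ and the closedness of $D'$ is precisely a fleshed-out version of that hint. The only imprecision is the assertion that the limit $L$ obtained along a subsequence with $(\mu/\lambda)^k \to 1$ is a rank-one map onto $\CC v_i$: when several maximal Jordan blocks are present, $L$ is a sum of rank-one maps onto the various $\CC v_j$, each entering with a coefficient of constant nonzero modulus, so in fact every $L \in \mathcal{L}$ already has image equal to the full dominant eigenspace --- which only simplifies your final spanning step.
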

We omit the proof that only uses the fact that the maximum growth of $\|A^n u\|$ is
$\lambda^n n^k$ where $k$ is the  maximal dimension of a Jordan block
associated with an eigenvalue of maximal modulus of $A$.

\medskip

Let
\[
D_{FS,d} = \{(x_1,\ldots,x_d) \in \PP(\RR_+^d): \forall i,j,k \quad x_i < x_j + x_k \}
\]
and
\[
D_{Br} = \{(x,y,z) \in \PP(\RR_+^3):\ x > y > z \}.
\]
Then it is easily seen that $D_{FS,d}$ is adapted for the fully subtractive matrices in dimension $d$
and $D_{Br}$ is adapted for the Brun matrices. In figures~\ref{fig:fs} and~\ref{fig:brun} one can see the projective picture
of the domains $D^{(1)}$, $D^{(2)}$ and $D^{(3)}$. Note that in these cases, the domains $D^{(i)}$ are
disjoint but that it is not a requirement in our definition. Moreover, one can see that in the Brun
case the $D^{(i)}$ form a partition while it is not the case for the fully subtractive.
\begin{figure}[!ht]
\centering
\begin{minipage}{0.4\textwidth}
\begin{gather*}
A_{FS,3}^{(1)}=\begin{pmatrix}1&0&0\\1&1&0\\1&0&1\end{pmatrix}
\qquad
A_{FS,3}^{(2)}=\begin{pmatrix}1&1&0\\0&1&0\\0&1&1\end{pmatrix}
\\
A_{FS,3}^{(3)}=\begin{pmatrix}1&0&1\\0&1&1\\0&0&1\end{pmatrix}.
\end{gather*}
\end{minipage}
\begin{minipage}{0.4\textwidth}
\centering \includegraphics{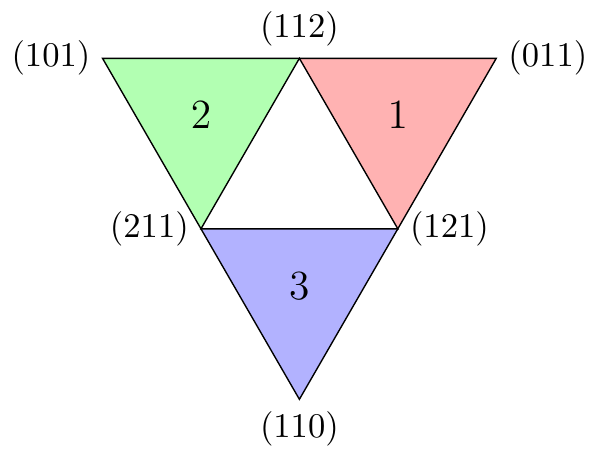}
\end{minipage}
\caption{Fully subtractive partition of the domains with $d=3$.}
\label{fig:fs}
\end{figure}

\begin{figure}[!ht]
\begin{center}
\begin{minipage}{0.4\textwidth}
\begin{gather*}
A_{Br}^{(1)} = \begin{pmatrix}1&1&0\\0&1&0\\0&0&1\end{pmatrix}
\qquad
A_{Br}^{(2)} = \begin{pmatrix}1&1&0\\1&0&0\\0&0&1\end{pmatrix}
\\
A_{Br}^{(3)} = \begin{pmatrix}1&0&1\\1&0&0\\0&1&0\end{pmatrix}.
\end{gather*}
\end{minipage}
\begin{minipage}{0.4\textwidth}
\begin{center}
\includegraphics{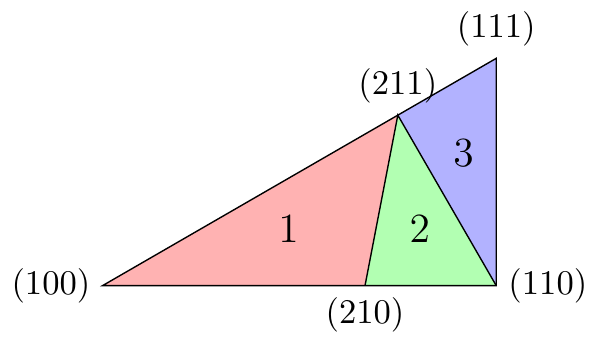}
\end{center}
\end{minipage}
\end{center}
\caption{The matrices and domains for the Brun algorithm.}
\label{fig:brun}
\end{figure}

If the $D^{(i)}$ are disjoint one can define a continued fraction algorithm
as follows. One defines a partial map $f: D \dashrightarrow D$ by setting
$fx = (A^{(i)})^{-1} x$ on $D^{(i)}$.

One can compute that for Brun one has
\[
f_{Br}(x) = \sort(x-y,y,z)
\]
where $\sort: \PP(\RR_+^3) \rightarrow D$ is the map which permutes the coordinates in order to sort them.
While for the fully subtractive one has
\[
f_{FS,d}(x) = (x_1-x_i, x_2-x_i, \ldots, x_{i-1}-x_i, x_i, x_{i+1}-x_i, \ldots, x_d - x_i)
\qquad
\text{if $x_i = \min(x_1, \ldots, x_d)$}.
\]

\section{Strategy}
The proofs of Theorems~\ref{thm:fs} and~\ref{thm:brun} follow a general strategy that we describe now.
We let $\|.\|$ be the $L^\infty$ norm on $\RR^d$ and the associated operator norm on matrices. That is
for a vector $v$ and a matrix $A$
\[
\|v\| = \max (|v_1|, \ldots, |v_d|)
\quad \text{and} \quad
\|A\| = \max_{i=1,\ldots,d} \sum_{j=1,\ldots,d} |a_{ij}|.
\]
In this section the norm used on $\RR^d$ has no importance. But it turns out
that, to apply the results to continued fraction algorithms, the most convenient one
was always the $L^\infty$ norm.

To a non-zero vector $v$ in $\RR^d$, we associate its dual hyperplane $H_v = \{z \in \RR^d; \langle v,z \rangle = 0\}$.
Given a non zero vector $v$ in $\RR^d$ we define the following semi-norm on $d \times d$ matrices
\[
\|B\|_v = \sup_{z \in H_v \backslash \{0\}} \frac{\|B z\|}{\|z\|} = \max_{\stackrel{\|z\| \leq 1}{z \in H_v}} \|B z\|.
\]
More generally, if $\Lambda \subset \RR^d$ is a cone, we define
\[
\|B\|_\Lambda = \sup_{v \in \PP(\Lambda)} \|B\|_v.
\]

Let $(A^{(i)})_{i \in \cA}$ be a finite or countable set of matrices as in Section~\ref{sec:cf}.
Let also $\Delta = \cA^\NN$ and $D \subset \PP(\RR^d)$ be adapted.
Recall that $D_n(x) = A_n(x) D = D^{(x_0 x_1 \ldots x_{n-1})}$ (in particular, $D_0(x) = D$ and $D_1(x) = D^{(x_0)}$)
and that $D_\infty(x) = \bigcap_{n \geq 0} D_n(x)$.
\begin{lemma}  \label{lem:strategy}
Let $(A^{(i)})_{i \in \cA}$ be a finite or countable set of matrices in $\SL(d,\ZZ)$ and let
$D \subset \PP(\RR^d)$ be adapted. Let $B^{(i)}$ (respectively
$B_n(x)$) denote the transposed of $A^{(i)}$ (resp. $A_n(x)$).
If for all $i \in \cA$ we have
\[
\left\| B^{(i)} \right\|_{D^{(i)}} \leq 1.
\]
Then for any point $x = x_0 x_1 \ldots \in \Delta$ we have
\[
\| B_n(x) \|_{D_n(x)} \leq 1.
\]
In particular, if $x = (x_0 x_1 \ldots x_{p-1})^\infty$ is periodic the matrix
$A_p(x) = A^{(x_0)} A^{(x_1)} \ldots A^{(x_{p-1})}$ has at most one eigenvalue
greater than one in absolute value.
\end{lemma}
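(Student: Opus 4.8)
The plan is to prove the two assertions in turn, the first by an induction on $n$ using the cocycle property, the second by combining the conclusion of the first with the elementary inequality on the second eigenvalue quoted in the introduction together with Lemma~\ref{lem:dominant_eigenspace}.

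For the inductive step, I would argue as follows. Fix $x = x_0 x_1 \ldots \in \Delta$ and suppose $\|B_n(Tx)\|_{D_n(Tx)} \le 1$; the base case $n=0$ is trivial since $B_0 = \mathrm{Id}$ and $\|\mathrm{Id}\|_v \le 1$ for the $L^\infty$ norm. Now $B_{n+1}(x) = (A^{(x_0)} A_n(Tx))^{\mathsf T} = B_n(Tx)\, B^{(x_0)}$, and $D_{n+1}(x) = A^{(x_0)} D_n(Tx) \subset A^{(x_0)} D = D^{(x_0)}$ since $D$ is adapted. So it suffices to bound $\|B_n(Tx) B^{(x_0)}\|_v$ for $v \in \PP(D_{n+1}(x))$. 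The key point is the behaviour of the dual hyperplane under the transpose: for a nonzero $v$ one has $B^{(x_0)} H_{(A^{(x_0)})^{\mathsf T} v} = H_v$, because $\langle (A^{(x_0)})^{\mathsf T} v,\, z\rangle = \langle v,\, A^{(x_0)} z\rangle$. Writing $v = (A^{(x_0)})^{\mathsf T} w$... — more cleanly, I would run the argument in the other direction: given $v \in \PP(D_{n+1}(x))$, write $v = (A^{(x_0)})^{-\mathsf T} v'$ where... Actually the clean bookkeeping is: for $z \in H_v$, set $z' = B^{(x_0)} z$; then $\langle (A_n(Tx))^{\mathsf T}\,\text{-preimage}\ldots\rangle$. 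I will organize it as the submultiplicativity-type estimate
\[
\|B_n(Tx)\, B^{(x_0)}\|_v \;\le\; \|B^{(x_0)}\|_v \cdot \|B_n(Tx)\|_{w}
\]
where $w$ is the unique (up to scaling) vector with $H_w = B^{(x_0)} H_v$, equivalently $w = (A^{(x_0)})^{\mathsf T} v$; and one checks $w \in \PP(D_n(Tx))$ whenever $v \in \PP(D_{n+1}(x))$, since $D_{n+1}(x) = A^{(x_0)} D_n(Tx)$ and $(A^{(x_0)})^{\mathsf T}$ sends $A^{(x_0)} D_n(Tx)$ back to a cone containing $D_n(Tx)$ — or rather, the relevant inclusion is $\PP((A^{(x_0)})^{\mathsf T} D_{n+1}(x)) \supset \PP(D_n(Tx))$, which follows from $A^{(x_0)}(A^{(x_0)})^{\mathsf T}$ acting on the cone; I will verify this geometric inclusion directly for the matrices at hand or, better, phrase it abstractly so it needs only $D_n(Tx) \subset D$ and positivity. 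Feeding in $\|B^{(x_0)}\|_v \le \|B^{(x_0)}\|_{D^{(x_0)}} \le 1$ (valid since $v \in \PP(D_{n+1}(x)) \subset \PP(D^{(x_0)})$) and $\|B_n(Tx)\|_w \le \|B_n(Tx)\|_{D_n(Tx)} \le 1$ by the inductive hypothesis, we get $\|B_{n+1}(x)\|_v \le 1$; taking the sup over $v$ closes the induction.

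For the second assertion, let $x = (x_0 \ldots x_{p-1})^\infty$ and $A = A_p(x)$, $B = B_p(x) = A^{\mathsf T}$. By the first part, $\|B\|_{D_p(x)} \le 1$; since $x$ is periodic, $D_\infty(x) = \bigcap_k D_{kp}(x)$ is nonempty, and by Lemma~\ref{lem:dominant_eigenspace} it meets every $D_{kp}(x)$ in something containing a basis of the dominant eigenspace of $A^k = A_{kp}(x)$, which is the dominant eigenspace of $A$. In particular the Perron-type dominant eigenvector $v_A$ of $A$ lies in $\overline{D_\infty(x)} \subset D_p(x)$ (note $A$ has non-negative integer entries once the $A^{(i)}$ do, so this is genuinely a Perron–Frobenius situation when $A$ is primitive, but for the present statement we only need that $A$ has a dominant eigenvector located in $D_p(x)$). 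Now apply the inequality from the introduction with the hyperplane $H_{v_A}$: the transpose $B = A^{\mathsf T}$ fixes the line through... — more precisely, the second-largest eigenvalue $\lambda_2$ of $A$ satisfies $\lambda_2 \le \sup_{z \in v_A^{\perp}\setminus\{0\}} \|Az\|/\|z\|$, and since $A = B^{\mathsf T}$ this sup equals $\|B^{\mathsf T}\|$ restricted to $H_{v_A}$; but by a transpose/orthogonality identity $\sup_{z \perp v_A} \|A z\|/\|z\| = \sup_{z \perp v_A^{*}} \|B z\|/\|z\|$ where $v_A^{*}$ is the left dominant eigenvector of $A$, i.e. the dominant eigenvector of $B$. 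So I should instead locate the dominant eigenvector of $B$ inside $D_p(x)$ and invoke $\|B\|_{D_p(x)} \le 1$ directly. Concretely: the dominant eigenvector of $B = A^{\mathsf T}$ is $w$ with $H_w \supset$ (span of the non-dominant generalized eigenspaces of $A$), and $B$ maps $H_w$ into itself with all singular directions scaled by the non-dominant eigenvalues of $A$; hence the spectral radius of $B|_{H_w}$, which equals $\max(|\lambda_2|,\ldots)$ for $A$, is at most $\limsup_n \|B^n|_{H_w}\|^{1/n}$. I will bound $\|B^n|_{H_w}\|$ using that $B^n = B_{np}(x)$ and that $w \in \PP(D_{np}(x))$ for all $n$ (because $w$, being the dominant eigenvector of $B$, lies in the closure of $\bigcap_n D_{np}(x)^{\mathsf T}$-type intersection — here I must be careful about whether it is $D$ or its "dual" cone, and this is the delicate point, see below). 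Granting that, $\|B^n|_{H_w}\| = \|B_{np}(x)\|_{w} \le \|B_{np}(x)\|_{D_{np}(x)} \le 1$, so the spectral radius of $B|_{H_w}$ is $\le 1$, giving $|\lambda_j| \le 1$ for every non-dominant eigenvalue $\lambda_j$ of $A$, i.e. $A$ has at most one eigenvalue of modulus $> 1$.

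The main obstacle I anticipate is precisely the bookkeeping around \emph{which} cone the relevant eigenvector lives in: the hypotheses are phrased with $B^{(i)} = (A^{(i)})^{\mathsf T}$ acting against the domains $D^{(i)}$ cut out for the $A^{(i)}$, and one must track carefully that, under transposition, the dominant eigenvector of $B_n(x)$ (equivalently the left dominant eigenvector of $A_n(x)$) is the correct object to pair with the hyperplane semi-norm $\|\cdot\|_v$, and that it does sit in $D_n(x)$ rather than in some dual object — Lemma~\ref{lem:dominant_eigenspace} as stated concerns the dominant \emph{eigenspace} of $A_n(x)$ itself, and reconciling that with the transpose appearing in $\|B_n(x)\|_{D_n(x)} \le 1$ is the one step that requires genuine care rather than routine computation. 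A clean way around it is to note that $\|B\|_v \le 1$ for \emph{all} $v$ in a forward-invariant cone forces every eigenvalue of $B$ whose eigenvector is not proportional to a fixed dominant direction to have modulus $\le 1$, by a direct Jordan-form argument applied to $B$ restricted to the union of non-dominant generalized eigenspaces, which is exactly $H_w$ for $w$ the dominant eigenvector of $B$; I will phrase the final paragraph this way to keep the transpose symmetric with the statement.
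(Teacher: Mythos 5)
Your architecture is the paper's own: prove $\|B_n(x)\|_{D_n(x)}\le 1$ by induction using a submultiplicativity estimate for the semi-norms $\|\cdot\|_v$ along the cocycle, then combine it with Lemma~\ref{lem:dominant_eigenspace} to bound the non-dominant eigenvalues. However, at both points where the duality between $A$ and its transpose $B$ must be unwound, you unwind it the wrong way, and the ``delicate points'' you flag are exactly where this bites. In the inductive step, the correct covariance of dual hyperplanes is $B^{(x_0)}H_v=H_{(A^{(x_0)})^{-1}v}$: for $z\in H_v$ one has $\langle (A^{(x_0)})^{-1}v,\,B^{(x_0)}z\rangle=\langle A^{(x_0)}(A^{(x_0)})^{-1}v,\,z\rangle=\langle v,z\rangle=0$. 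Hence the auxiliary vector in $\|B_n(Tx)B^{(x_0)}\|_v\le\|B^{(x_0)}\|_v\cdot\|B_n(Tx)\|_w$ must be $w=(A^{(x_0)})^{-1}v$, not $w=(A^{(x_0)})^{\mathsf T}v$ as you assert (your claimed identity $B^{(x_0)}H_{(A^{(x_0)})^{\mathsf T}v}=H_v$ is the covariance for $A^{(x_0)}$ acting on hyperplanes, not for $B^{(x_0)}$). With the correct $w$ the cone bookkeeping is automatic and fully general: $v\in D_{n+1}(x)=A^{(x_0)}D_n(Tx)$ gives $w\in D_n(Tx)$ on the nose, while $v\in D_{n+1}(x)\subset D^{(x_0)}$ controls the first factor. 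The inclusion $\PP((A^{(x_0)})^{\mathsf T}D_{n+1}(x))\supset\PP(D_n(Tx))$ that you propose to verify is both in the wrong direction for what you need and false in general (try $A_{FS,3}^{(1)}$ on $D_{FS,3}$), and checking it ``for the matrices at hand'' would forfeit the generality of the lemma. This corrected step is precisely the paper's computation, written there as $\|B_{n+1}(x)\|_{A_1(x)v}\le\|B_n(Tx)\|_v\cdot\|B_1(x)\|_{A_1(x)v}$ for $v\in D_n(Tx)$.

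The same reversal occurs in the spectral conclusion. The eigenvectors of $B_p(x)$ for non-dominant eigenvalues are orthogonal to the dominant eigenspace of $A_p(x)$, not to that of $B_p(x)$: if $A_p(x)v=\lambda v$ and $B_p(x)u=\mu u$ with $\mu\neq\lambda$, then $\lambda\langle v,u\rangle=\langle A_p(x)v,u\rangle=\langle v,B_p(x)u\rangle=\mu\langle v,u\rangle$ forces $\langle v,u\rangle=0$, i.e.\ $u\in H_v$. So the hyperplane to pair with the semi-norm is $H_v$ for $v$ a dominant eigenvector of $A_p(x)$ --- and this is exactly what Lemma~\ref{lem:dominant_eigenspace} locates inside $D_\infty(x)\subset D_p(x)$; then $|\mu|\,\|u\|=\|B_p(x)u\|\le\|B_p(x)\|_v\,\|u\|\le\|u\|$ finishes the proof with no further input. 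Your final formulation instead works with $H_w$ for $w$ the dominant eigenvector of $B_p(x)$: that hyperplane is in general not preserved by $B_p(x)$ (it is $A_p(x)$ that preserves $w^\perp$ in that case), so ``the spectral radius of $B|_{H_w}$'' is not well defined, and locating $w$ in $D_p(x)$ --- the unresolved ``delicate point'' you mention --- is indeed impossible in general, since Lemma~\ref{lem:dominant_eigenspace} concerns the eigenvectors of $A_n(x)$, not of its transpose. Once both adjoints are flipped to the correct side, your argument coincides with the paper's.
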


\begin{proof}
Let $B$ be the transposed cocycle.
The hypothesis is just the case $n=1$. Assume that this inequality holds for $n$.
By definition $A_{n+1}(x) = A_1(x) A_n(Tx)$ and $D_{n+1}(x) = A_1(x) D_n(Tx)$.
Hence $v \in D_n(Tx)$ if and only if $A_1(x) v \in D_{n+1}(x)$.
Let us choose $v \in D_n(Tx)$, then
\[
\| B_{n+1}(x) \|_{A_1(x)v} = \| B_n(Tx) B_1(x) \|_{A_1(x)v} \leq \|B_n(Tx)\|_v \cdot \|B_1(x)\|_{A_1(x)v}.
\]

Now, let $x = (x_0 x_1 \ldots x_{p-1})^\infty$ be a periodic point. Because
$D_{p-1}(x) \supset D_\infty(x)$, the matrix $B_p(x)$ satisifies 
\[
\| B_p(x) \|_{D_\infty(x)} \leq 1.
\]
By Lemma~\ref{lem:dominant_eigenspace} a basis of the dominant eigenspace of $B_p$ belongs to
$D_\infty(x)$. Consequently, the union of the orthogonals of $D_\infty(x)$ contains
all eigenspaces corresponding to the non-dominant eigenvalues. From the above inequality,
we deduce that the absolute value of all eigenvalues different from the first one are bounded by $1$.
\end{proof}

\section{Pisot property for Arnoux-Rauzy matrices} \label{sec:fs}
In this section we prove Theorem~\ref{thm:fs}. Let $d \geq 2$ be an integer
and let $e_1$, $e_2$, \ldots $e_d$ be the canonical
basis of $\RR^d$. Let $e = e_1 + e_2 + \ldots + e_d$ and for $i=1,\ldots,k$ let $f_i = e - e_i$. 
The domain $D_{FS,d}$ is the convex hull of the rays vectors $\RR_+ f_i$.

Let as usual $B^{(w)} = (A^{(w)})^*$ and $B_n(x) = (A_n(x))^*$.
We claim that we even have a stronger property than what is required in Lemma~\ref{lem:strategy}
\[
\forall i=1,\ldots,d, \qquad \| B^{(i)} \|_D \leq 1.
\]
Let us prove this claim.
Let $v \in D_\infty$, then we may write $v = \mu_1 f_1 + \mu_2 f_2 + \ldots + \mu_d f_d$
for some non-negative numbers $\mu_i$ that satisfy $\mu_1 + \mu_2 + \ldots + \mu_d = 1$.
We hence have $v = e - \sum \mu_i e_i$ and
\[
H_v = \{z \in \RR^d; \langle z,v \rangle = 0\} = \{z \in \RR^d; \sum z_j = \sum \mu_j z_j\}.
\]
Given $z \in H_v$ we have $B^{(i)} z = (z_1, \ldots, z_{i-1}, \sum \mu_z z_j, z_{i+1}, \ldots, z_d)$,
In other words, $B^{(i)}$ acts on $H_v$ as a stochastic matrix $P(i,v)$ which is the identity except
its $i$-th row which is $(\mu_1,\mu_2,\ldots,\mu_d)$.
In particular, $\|B^{(i)}\|_v \leq 1$.

\medskip

Now for a given finite product $A = A^{(i_0)}A^{(i_1)} \ldots A^{(i_{p-1})}$ if one of the letter
$\{1,\ldots,d\}$ is missing in the sequence $(i_0, i_1, \ldots, i_{p-1})$ then $A e_i = e_i$ and
so the matrix is not primitive. On the other hand, if all letters appear it is easy to see
that all entries in $A$ are positive.

\medskip

Now let $x = (x_0 x_1 \ldots x_{p-1})^\infty$ be a periodic point that contains all letters from $\cA$.
Because of positivity, all the orbits is contained in the interior of $D$ and the dominant eigenvalue
is simple. Let $v_0 \in D$ be a dominant eigenvector and let $v_n = A_n(x) v_0$. Because all $(v_n)$
belongs to the interior of $D$ the coefficients $\mu_1$, $\mu_2$, \ldots, $\mu_d$ that appear in
the stochastic matrices $P(x_i, v_i)$ are all positive. Now, the product
$P(x_{p-1}, v_{p-1}) \ldots P(x_1, v_1) P(x_0, v_0)$ is a stochastic matrix with all its entries positive.
Hence, its second eigenvalue, which is also the second eigenvalue of $A_p(x)$, is less than $1$ in absolute
value.

\section{Pisot property for Brun algorithm (in dimension 3)}
We now turn to the proof of Theorem~\ref{thm:brun}.
Let $\cA = \{1,2,3\}$ and $A^{(1)}$, $A^{(2)}$, $A^{(3)}$ be the matrix of the Brun algorithm.
We let $\Delta = \cA^\NN$ and denote by $A$ and $B$ respectively the cocycle and the transposed
cocycle.
We claim that, as in the case of the fully subtractive, we have the stronger property that
\[
\forall i \in \{1,2,3\},\quad  \|B^{(i)}\|_{D^{(i)}} \leq 1.
\]
We only need to consider the matrix $\displaystyle B^{(1)} =
\begin{pmatrix}1&0&0\\1&1&0\\0&0&1\end{pmatrix}$ since the other two are
obtained by multiplying by a permutation matrix which will not change the $L^\infty$-norm.

Let $v = \mu_1 (1:0:0) + \mu_2 (1:1:0) + \mu_3 (1:1:1) \in D$ for some $\mu_1,\mu_2,\mu_3$ such that $\mu_1+\mu_2+\mu_3=1$
and $H_v = \{z \in \RR^d; z_1 + z_2 = \mu_1 z_2 - \mu_3 z_3 \}$.
Now, for any $z \in H_v$ we have $B^{(1)} (z_1,z_2,z_3) = (z_1,z_1+z_2,z_3) = (z_1, \mu_1 z_2 - \mu_3 z_3, z_3)$.
In other words $\|B^{(1)} (z_1,z_2,z_3)\|_1 \leq \|(z_1,z_2,z_3)\|$.

\medskip

Now, given a product $A = A_Br^{(i_1)} \ldots A_Br^{(i_n)}$ it is easy to see that if $3$ does not appear in
the sequence $(i_1, i_2, \ldots, i_n)$ then $A e_3 = e_3$ and hence the matrix $A$ can not be irreducible.
Conversly, if $3$ appears then $A^3$ is easily seen to be positive.

\medskip

Now consider the matrix $P$ built from the begining of the proof. As in the case of the fully subtractive
algorithm for a primitive product we got that the $\mu_i$ are all positive. Given a product $A$ where
the matrix $A_{Br}^{(3)}$ appears, the matrix $A^3$ is then such that all rows are such that sum of their
absolute values is strictly less than one. In other words $\|A^3\| < 1$.

\section{Lyapunov exponents} \label{sec:lyapunov}
Let $(A^{(i)})_{i \in \cA}$ be a finite or countable set of matrices.
Let $\Delta$, $T$, $A$, $B$ denote as before the infinite words, the shift map
the cocycle and the transposed cocycle. Let also $D$ be adapated to these matrices.

The asymptotic of the cocycle (or the transposed cocycle) are studied through \emph{Lyapunov
exponents}. Given a $T$-invariant ergodic probability measure $\mu$ on $\Delta$, we associate
the real numbers $\gamma^\mu_1 \geq \gamma^\mu_2 \geq \ldots \geq \gamma^\mu_d$ defined by
\[
\forall k \in \{1,2,\ldots,d\}, \quad
\gamma_1 + \gamma_2 + \ldots + \gamma_k = \lim_{n \to \infty} \int_\Delta \frac{\log \|\wedge^k A_n(x)\|}{n} d\mu(x).
\]
In order to be well defined we assume that
\begin{equation} \label{eq:integrability}
\int_\Delta \max \left(\log \|A_1(x)\|, \log \|A_1(x)^{-1}\|\right) d\mu(x) < \infty
\end{equation}
and we refer to this condition as the \emph{$\log$-integrability} of the cocycle.
If the alphabet $\cA$ is finite the cocycle is automatically $\log$-integrable.
If $x$ is a periodic point of $T$ and $\mu = (\delta_{x} + \delta_{Tx} + \ldots + \delta_{T^{n-1}x}) / n$ is
the sum of Dirac masses distribued along its orbit, then the associated Lyapunov exponents are the logarithms of 
the absolute values of eigenvalues of $A_n(x)$ where $n$ is the period of $x$.
In that sense, Lyapunov exponents generalize eigenvalues.

Given a measure $\mu$ for which the cocycle is $\log$-integrable, we say that $(\Delta,T,A,\mu)$ has \emph{Pisot spectrum}
if the associated Lyapunov exponents satisfy $\gamma_1^\mu > 0 > \gamma_2^\mu$. This property is related to the
strong convergence of higher dimensional continued fraction algorithm~\cite{Lagarias}.

Now we restate Lemma~\ref{lem:strategy} in a more dynamical context.
\begin{lemma} \label{lem:lyapunov}
Let $(A^{(i)})_{i \in \cA}$ be a finite or countable set of non-negative matrices in $\SL(d,\ZZ)$.
Let $(\Delta,T,A,B)$ be the associated full shift with its cocycle and its transposed cocycle.
Let also $D$ be adapated.
Assume that
\[
\forall i \in \cA, \quad  \left\| \left(A^{(i)}\right) \right\|_{D^{(i)}} \leq 1.
\]
Let $\mu$ be a $T$-invariant and ergodic measure on $D$ so that
\begin{itemize}
\item the cocycle $A_n$ is $\log$-integrable,
\item there exists a cylinder $[w]$ such that $\mu([w]) > 0$, $A^{(w)}$ is positive and $\|A^{(w)}\|_{D^{(w)}} < 1$.
\end{itemize}
Then two first Lyapunov exponents of the cocycle $A_n$ for the measure $\mu$ satisfies $\gamma_1^\mu > 0 > \gamma_2^\mu$.
\end{lemma}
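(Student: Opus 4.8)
The plan is to deduce the statement from Lemma~\ref{lem:strategy} applied along $\mu$-typical orbits, combined with the classical characterization of the top Lyapunov exponent via the asymptotic growth of the cone-preserving cocycle. First I would establish $\gamma_1^\mu > 0$. Since all the $A^{(i)}$ are non-negative integer matrices in $\SL(d,\ZZ)$ and the cone (or its projectivization $D$) is preserved, for $\mu$-a.e.\ $x$ the vectors $A_n(x) w$ with $w$ in the open cone stay in the cone; the hypothesis that the distinguished cylinder $[w]$ has positive measure, together with ergodicity and the Birkhoff ergodic theorem, guarantees that the word $w$ (hence a positive matrix $A^{(w)}$ with $\|A^{(w)}\|_{D^{(w)}} < 1$) appears with positive frequency along a typical orbit. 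A standard subadditivity/positivity argument then shows that the norm $\|A_n(x)\|$ grows at least like $c^{n}$ for some $c>1$: roughly, each occurrence of the block $w$ contracts transverse directions while the overall determinant is $1$, forcing the dominant direction to expand. This yields $\gamma_1^\mu > 0$; one has to be a little careful to invoke log-integrability so that the Oseledets theorem applies and the limit defining $\gamma_1^\mu$ exists.

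Next I would bound $\gamma_2^\mu$. The key point is that $\gamma_1^\mu + \gamma_2^\mu$ is the Lyapunov exponent of $\wedge^2 A_n$, but it is cleaner to argue directly: by Oseledets, $\gamma_2^\mu$ equals the exponential growth rate of $\|A_n(x) z\|$ for $z$ in the Oseledets subspace complementary to the top direction, for $\mu$-a.e.\ $x$. Now apply Lemma~\ref{lem:strategy} to the transposed cocycle: the hypothesis $\|A^{(i)}\|_{D^{(i)}} \le 1$ — which, transposed, is exactly the hypothesis of Lemma~\ref{lem:strategy} for $B^{(i)}$ — gives $\|B_n(x)\|_{D_n(x)} \le 1$ for all $n$. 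Dualizing, this means that on the hyperplane $v_n^\perp$, where $v_n$ spans the dominant direction of $B_n(x)$ (equivalently $D_n(x)$ shrinks onto the dominant eigendirection of $A_n(x)$), the map $A_n(x)$ is non-expanding. Since the non-top Oseledets subspace of $A_n$ lies asymptotically inside such hyperplanes (by Lemma~\ref{lem:dominant_eigenspace}, the dominant eigenspace of $B_n(x)$ is contained in $D_n(x)$, so its orthogonal complement captures the subdominant directions of $A_n(x)$), we get $\|A_n(x) z\| \le C \|z\|$ along the subdominant bundle, whence $\gamma_2^\mu \le 0$.

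To upgrade $\gamma_2^\mu \le 0$ to the strict inequality $\gamma_2^\mu < 0$, I would again exploit the distinguished cylinder $[w]$: there $\|A^{(w)}\|_{D^{(w)}} < 1$, so each visit to $[w]$ strictly contracts the transverse hyperplane by a definite factor $\rho < 1$. By Birkhoff, a $\mu$-typical orbit of length $n$ visits $[w]$ at least $\theta n$ times with $\theta = \mu([w]) > 0$, so $\|A_n(x) z\| \le C \rho^{\theta n} \|z\|$ on the subdominant bundle, giving $\gamma_2^\mu \le \theta \log \rho < 0$. Combining with $\gamma_1^\mu > 0$ finishes the proof. I expect the main obstacle to be the bookkeeping that matches the Oseledets filtration of $A_n$ with the geometric ``dual cone'' picture: one must verify that the non-top Oseledets subspace at $x$ really does sit inside the family of hyperplanes $v_n^\perp$ along which Lemma~\ref{lem:strategy} controls the norm, and that the semi-norm estimate $\|B_n(x)\|_{D_n(x)}\le 1$ (which a priori only bounds the action on a moving hyperplane depending on $n$) can be composed in $n$ without losing a growing constant — this is exactly the content of the induction already carried out in the proof of Lemma~\ref{lem:strategy}, so it should go through, but it needs to be spelled out in the a.e.\ rather than periodic setting.
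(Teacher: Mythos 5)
Your proposal is correct and follows essentially the same route as the paper: Birkhoff's theorem gives a linear number of visits to $[w]$ along a $\mu$-typical orbit, the induction of Lemma~\ref{lem:strategy} supplies the submultiplicativity of the dual semi-norm along the covariant hyperplanes $H_{v(x)}$, and each visit to $[w]$ contributes a definite contraction factor $\delta<1$, yielding $\gamma_2^\mu<0$; the bookkeeping you flag (identifying $\gamma_2^\mu$ with the growth rate of $\|B_n(x)\|_{v(x)}$ on the line bundle $D_\infty(x)=\RR_+v(x)$) is exactly what the paper does. The only minor divergence is your argument for $\gamma_1^\mu>0$ via determinant one plus transverse contraction, where the paper instead bounds the entries of $A_n(x)$ below by those of $C^{k_n}$ ($C$ the all-ones matrix) using positivity of $A^{(w)}$ --- both work.
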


\begin{proof}
Let us first prove that $\gamma_1 > 0$.

Now, by definition, for $\mu$-almost every $x$
\[
\gamma_1 = \lim_{n \to \infty} \frac{\log \|A_n(x)\|}{n}
\]
Let $m=|w|$ be the length of $w$ and consider the position which are multiple of $m$.
For a $\mu$-generic $x$ we have by Birkhoff theorem that
\[
\lim_{n \to \infty} \frac{ \# \{i \leq n:\ T^i(x) \in [w]\}} {n} = \mu([w])
\]
In other words, given sequence of length $n$ large enough we can find a linear number of
disjoint occurrences of $w$ (up to a sublinear error). Let $k_n$ be the number of these occurrences, then
necessarily each entry of $A_n(x)$ is larger than the corresponding one in
$C^{k_n}$ where $C$ is the matrix which contains a $1$ in every position.  In
particular $\gamma_1 > 0$.

From the existence of $w$ it also follows that for a $\mu$-generic $x$ the cone $D_\infty(x)$ is reduced to
a line contained in the interior of $\RR_+^d$. We can hence define $\mu$-almost everywhere a function
$v: \Delta \rightarrow \RR_+^d$ by $D_\infty(x) = \RR_+ v(x)$ and $\|v(x)\| = 1$.
We then have the following formulas which holds for $\mu$-almost every $x$
\[
\gamma_1 = \lim_{n \to \infty} \frac{- \log \|A_n(x)^{-1} v(x)\|}{n} d\mu(x)
\quad \text{and} \quad
\gamma_2 = \lim_{n \to \infty} \frac{\log \|B_n(x)\|_{v(x)}}{n} d\mu(x).
\]
It is then easy to derive the estimate for $\gamma_2$. The map $x \mapsto v(x)$ and the
dual hyperplanes $H_{v(x)}$ satisfy the following covariance properties
\[
D_\infty(Tx) = A(x)^{-1} D_\infty(x)
\quad \text{and} \quad
H_{A^{-1} v} = A^* H_v.
\]
Hence as in the proof of Lemma~\ref{lem:strategy}, we deduce that for all $v \in D_\infty(x)$
\[
\|B_{m+n}(x)\|_{v(x)} \leq \|B_{m}(x)\|_{v(x)} \|B_{n}(T^mx)\|_{v(T^m x)}
\]
In particular, for $\mu$-almost every $x \in [w]$, any $n \geq |w|$ we get that $\|B_n(x)\|_{v(x)} < 1$.
Let $\delta = \|B^{(w)}\|_{D^{(w)}} < 1$.
Using the same argument as in the estimation of $\gamma_1$ we get that
\[
\gamma_2 \leq \liminf_{n \to \infty} \frac{k_n \log \delta}{n}.
\]
And the above limit is strictly negative.
\end{proof}

\end{document}